\documentclass[11pt]{article}

\usepackage[a4paper,margin=2.6cm]{geometry}
\usepackage{microtype}
\usepackage{amsmath,amssymb,amsthm}
\usepackage{bm}
\usepackage{graphicx}
\usepackage{booktabs}
\usepackage[authoryear,round]{natbib}
\usepackage{xurl}
\usepackage[colorlinks=true,citecolor=blue,linkcolor=black,urlcolor=black]{hyperref}

\theoremstyle{plain}
\newtheorem{theorem}{Theorem}
\newtheorem{proposition}{Proposition}
\newtheorem{corollary}{Corollary}
\newtheorem{lemma}{Lemma}
\theoremstyle{definition}
\newtheorem{definition}{Definition}
\newtheorem{example}{Example}

\newenvironment{keywords}{\medskip\noindent\textit{Keywords}: }{\par}
\newcommand{\tbl}[2]{\caption{#1}\centering\small\setlength{\tabcolsep}{4pt}#2}
\renewcommand{\qed}{\hfill$\square$}

\title{Repairing covariance functions on metric trees\\ through a Hausdorff-type distance}
\author{Alfredo Alegr\'ia\\[4pt]
\normalsize Department of Statistics, Pontificia Universidad Cat\'olica de Chile\\
\normalsize Avenida Vicu\~na Mackenna 4860, Macul, Santiago 7820436, Chile\\
\normalsize \texttt{alfredo.alegria@uc.cl}}
\date{}

\begin{document}

\maketitle

\begin{abstract}
A compact metric tree is the natural domain of spatial data recorded along a river network, a root system, or the branched processes of a cell, and a suitable distance between two points is the length of the path joining them. That distance admits almost none of the covariance models of spatial statistics, since only the roughest members of each family stay nonnegative definite, and the resulting fields are nowhere mean square differentiable. The repair proposed here is a change of metric. Comparing two points through their distances to the leaves gives a Hausdorff-type metric built from the shortest-path one. The new metric has a closed form, agrees with the shortest-path one inside every edge, and is strictly smaller across branching points. Any correlation function valid in a Euclidean space of dimension equal to the number of leaves may then be composed with it, returning the Mat\'ern, powered exponential, and generalized Cauchy families at every degree of smoothness, and, on trees with few leaves, also correlations that change sign or have compact support. The same construction yields the multivariate Mat\'ern and space-time models of the Gneiting class. In simulations covering a range of smoothness and of spatial scale, maximum likelihood provides estimates without appreciable bias, and on a reconstructed microglial cell the smooth models reduce the cross-validated prediction error by half.
\end{abstract}

\begin{keywords}
Gaussian random field; Mat\'ern covariance function; Mean square differentiability; Metric
graph; Nonnegative definiteness.
\end{keywords}

\section{Introduction}
\label{sec:intro}

Spatial data are not always recorded on a Euclidean domain. A river network \citep{ver2010moving}, a root system \citep{wu2016relationships}, and the branched processes of a cell \citep{abdolhoseini2019segmentation} are typical examples, since in each of them the observations lie along a one-dimensional set that branches but never closes on itself, a structure represented by a compact metric tree.

Analysing data on a metric tree requires the usual tools of spatial statistics, namely a distance between any two of its points, a natural one being the length of the path joining them, a second-order random field $Z$ indexed by the points of the tree, with a given covariance structure, likelihood estimation, and prediction at unsampled locations. These tools are well developed in Euclidean spaces, where standard textbooks (see, e.g., \citealp{cressie2015statistics}) provide a large catalogue of valid (nonnegative definite) parametric covariance families, many of which are radial, or isotropic, so that the covariance depends only on the Euclidean distance. Their validity follows from classical results such as that of \citet{schoenberg1938}.

That catalogue is not available on non-Euclidean domains. On a metric graph equipped with the shortest-path distance, and in particular on a metric tree, the domain of this paper, the families in common use remain nonnegative definite only under restrictions on their parameters that confine them to their roughest members \citep{anderes2020,moller2024cox,alegria2026computationally}, and the fields that survive are nowhere mean square differentiable. The same limitation appears when these classes are extended to more versatile settings, such as nonstationary models \citep{song2021modeling,alegria2026non} or space-time models \citep{porcu2023stationary,10.1214/23-EJS2206}.

One response to this limitation abandons the covariance function. In \cite{bolin2024}, a stochastic partial differential equation is solved on metric graphs, giving random fields of any smoothness without requiring a closed-form expression for the covariance function. Another route retains the covariance models and instead modifies the way distances between points are measured. In \cite{filosi2026smooth}, the shortest-path distance on metric graphs is replaced by a polyharmonic resistance distance, written as an infinite series built from the spectral representation of an operator defined on the graph.

We follow the second route because our interest is in a closed-form covariance function, which comes with a variety of parametric families and established methods for estimation and prediction, widely used across many disciplines, including the economic, environmental, and health sciences \citep{fischer2010handbook}. We aim to preserve this framework on metric trees.

The branching structure allows us to construct a distinctive metric. We compare two points through their distances to the leaves, that is, to the terminal vertices of the tree, rather than directly, which replaces the shortest-path metric by a Hausdorff-type metric built from it \citep{baddeley1992}. The new metric is simple to evaluate, and inside every edge it agrees with the shortest-path metric. At longer separations it contracts distances, while keeping the two metrics comparable, and that contraction restores nonnegative definiteness and opens a catalogue of covariance models that depends on the number of leaves.

Under the new metric, every tree admits the Mat\'ern, powered exponential, and generalized Cauchy families at any degree of smoothness, so that inside each edge the field is the corresponding Euclidean one. Trees with few leaves also admit correlations that change sign or have compact support. The same argument covers multivariate and space-time covariances, among them the multivariate Mat\'ern and the Gneiting class.

We restrict attention to metric trees for two reasons. A metric graph with cycles may have no leaves at all, and a cycle breaks the agreement with the shortest-path distance inside an edge, and with it the local behaviour of the field. We return to this case in Section~\ref{sec:discussion}.

In simulations we examine the accuracy of the maximum likelihood estimates and the loss from interchanging the two metrics. We then fit the models to a reconstructed microglial cell whose local radius is recorded at hundreds of points along its processes, where the smooth models reduce the prediction error by half.

The paper is organized as follows. The metric is introduced in Section~\ref{sec:trees}, together
with its properties. Section~\ref{sec:cov} builds the catalogue of covariance models, and
Sections~\ref{sec:sim} and~\ref{sec:data} apply it to simulated data and to a reconstructed
microglial cell. Section~\ref{sec:discussion} closes with remarks on graphs with cycles
and on other non-Euclidean domains, and the proofs are collected in the Appendix.

\section{An $L^2$ Hausdorff distance on metric trees}
\label{sec:trees}

\subsection{The metric and its structure}

By a metric tree, denoted by $\mathbb{T}$, we mean a connected union of finitely many segments of finite length that meet only at their endpoints and contain no cycle. It is a special case of a compact metric graph, in which each segment is an edge and its endpoints are vertices. The distance $d(s,t)$ between two points of $\mathbb{T}$ is the length of the unique path joining them. On a tree it coincides with the resistance distance of \citet{anderes2020}.

{The degree of a vertex is the number of edges that meet at it.} Write
$\xi_{1},\ldots,\xi_{\kappa}$ for the leaves of the tree, that is, its vertices of degree
one. A finite tree with at least one edge has at least two of them, so that $\kappa\geq2$.
The distance introduced below compares two points of $\mathbb{T}$ through their distances to
the leaves rather than directly.

\begin{definition}
\label{def:h}
Let $\mathbb{T}$ be a compact metric tree with leaves $\xi_{1},\ldots,\xi_{\kappa}$. The $L^2$
Hausdorff distance on $\mathbb{T}$ is
\begin{equation}
\label{eq:h}
h_\kappa(s,t) = \left( \frac{1}{\kappa} \sum_{j=1}^{\kappa} \,\big|d(\xi_{j},s) -
d(\xi_{j},t)\big|^2 \right)^{1/2},
\qquad s,t\in \mathbb{T}.
\end{equation}
\end{definition}

Expression \eqref{eq:h} is the order-two member of the error metrics of
\cite{baddeley1992}, which measure distances between binary images with the sum taken over
all pixels in the image raster. The version in Definition \ref{def:h} is tailored to metric
trees, since the sum is taken only over the leaves, and its properties are collected in the
following proposition.

\begin{proposition}
\label{thm:struct}
Let $\mathbb{T}$ be a compact metric tree with $\kappa$ leaves and let $h_\kappa$ be given by
\eqref{eq:h}. Then the following hold.
\begin{enumerate}
\item[\rm (i)] $h_\kappa$ is a metric on $\mathbb{T}$.

\item[\rm (ii)] The two metrics are comparable in both directions, in the sense that
\begin{equation}
\label{eq:bilip}
\sqrt{2/\kappa}\; d(s,t) \;\leq\; h_\kappa(s,t) \;\leq\; d(s,t), \qquad s,t\in \mathbb{T}.
\end{equation}

\item[\rm (iii)] Equality holds on the right of \eqref{eq:bilip} if and only if no vertex of
degree three or more lies strictly between $s$ and $t$. In particular $h_{\kappa}(s,t)=d(s,t)$
whenever $s$ and $t$ lie in a common edge.
\end{enumerate}
\end{proposition}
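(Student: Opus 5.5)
The plan is to view $h_\kappa$ as a rescaled Euclidean distance between images in $\mathbb{R}^\kappa$, and to obtain all three items from one tree fact about projections onto a geodesic. Define $\Phi(s)=\kappa^{-1/2}\big(d(\xi_1,s),\ldots,d(\xi_\kappa,s)\big)\in\mathbb{R}^\kappa$. Then $h_\kappa(s,t)=\|\Phi(s)-\Phi(t)\|$. Symmetry, nonnegativity and the triangle inequality for $h_\kappa$ follow at once from those of the Euclidean norm. For (i) it then remains to show that $h_\kappa(s,t)=0$ forces $s=t$, and this will follow from the lower bound in (ii). So I would prove (ii) first and deduce (i).

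The upper bound in \eqref{eq:bilip} is the triangle inequality for $d$, which gives $|d(\xi_j,s)-d(\xi_j,t)|\le d(s,t)$ for every $j$; averaging the squares gives $h_\kappa\le d$.

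For the lower bound and for (iii) I use projections. Let $[s,t]$ be the unique geodesic from $s$ to $t$. For each leaf $\xi_j$ let $p_j$ be the point where the path from $\xi_j$ first meets $[s,t]$; it exists and is unique because $\mathbb{T}$ has no cycles. Then $d(\xi_j,s)=d(\xi_j,p_j)+d(p_j,s)$, and similarly for $t$, so that
\[
|d(\xi_j,s)-d(\xi_j,t)|=|d(p_j,s)-d(p_j,t)| .
\]
This equals $d(s,t)$ exactly when $p_j\in\{s,t\}$, and is strictly smaller when $p_j$ lies strictly between $s$ and $t$.

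For the lower bound, assume $s\neq t$ and extend the geodesic beyond $s$, away from $t$. Concretely, follow edges starting at $s$, always leaving a vertex by an edge other than the one of arrival, until a leaf $\xi_a$ is reached. If $s$ is itself a leaf, take $\xi_a=s$. Because the tree is finite and acyclic this walk terminates, and it gives $p_a=s$. In the same way, extending beyond $t$ gives a leaf $\xi_b$ with $p_b=t$. These two leaves are distinct, since $p_a\neq p_b$. Each contributes $d(s,t)^2$ to the sum, so $h_\kappa^2\ge 2d(s,t)^2/\kappa$.

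For (iii), by the projection identity, equality $h_\kappa=d$ holds if and only if every $p_j$ is an endpoint of $[s,t]$.
\begin{itemize}
\item If some vertex $v$ of degree at least three lies strictly between $s$ and $t$, then $v$ has an edge not on $[s,t]$. Extending that branch away from $[s,t]$, as above, reaches a leaf whose projection is $v$, and this makes the inequality strict.
\item Conversely, suppose no such vertex exists. Every point strictly inside $[s,t]$ then has degree two: it is either interior to an edge or a degree-two vertex, and it cannot be a leaf. Hence any path from outside $[s,t]$ can enter the geodesic only at $s$ or $t$, so every $p_j\in\{s,t\}$.
\end{itemize}
The statement about a common edge is the special case in which there are no vertices strictly between $s$ and $t$.

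The only delicate step is the existence and distinctness of the two extremal leaves, together with the well-definedness of the projection $p_j$. Both rest on acyclicity and finiteness, which I would state explicitly as a short lemma.
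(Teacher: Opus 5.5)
Your proof is correct and is essentially the paper's: the embedding into $\mathbb{R}^\kappa$ gives the metric axioms, your projection identity $|d(\xi_j,s)-d(\xi_j,t)|=|d(p_j,s)-d(p_j,t)|$ is the content of Lemma~\ref{lem:id}, and the two extremal leaves and the third edge at a branching vertex are the same arguments the paper uses for (ii) and (iii). The differences are minor. You obtain $h_\kappa(s,t)=0\Rightarrow s=t$ directly from the lower bound in (ii), whereas the paper gives a separate argument via the path joining two leaves through $s$. You also get the upper bound from the reverse triangle inequality rather than from $x_j(1-x_j)\le 1/4$. Both shortcuts are valid and slightly more economical.
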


The proof is given in the Appendix. 

The choice of the leaves in \eqref{eq:h} is crucial. That expression could be written with any finite set of points of $\mathbb{T}$ in place of $\xi_{1},\hdots,\xi_{\kappa}$, and we call those the reference points. Each reference point $\xi$ enters through the term $|d(\xi,s)-d(\xi,t)|$. If the path from $\xi$ to $t$ passes through $s$, that term equals $d(s,t)$, and the same holds with
$s$ and $t$ interchanged. If instead it reaches the path from $s$ to $t$ at its midpoint, the term
is zero and $\xi$ carries no information about the pair.

The leaves are the reference points for which the full distance is always attained. Moving away from $s$ along any direction other than the one towards $t$, the path terminates at a leaf beyond $s$, and $s$ itself serves when it is a leaf. Doing the same at $t$ gives a leaf
beyond $t$. Every pair of points is therefore measured at full distance by at least two leaves, which is what produces the lower bound in part (ii) of Proposition \ref{thm:struct}. Neither removing nor adding reference points improves on this. With a single reference point $\xi$, \eqref{eq:h} reduces to $|d(\xi,s)-d(\xi,t)|$, which is zero for any two points at the same distance from $\xi$, even when they are far apart in the tree. Adding points to the leaves does not help either, since a reference point placed at a branching vertex contributes a zero term to the sum in \eqref{eq:h} for the two points that lie symmetrically about it, and lowers the average.

The map behind \eqref{eq:h} belongs to the family of Fr\'echet embeddings, which represent a point of a metric space by its distances to a fixed set of reference points \citep{linial1995geometry}. To the best of our knowledge, this construction has not been studied with the leaves of a metric tree as reference points, and the properties of the resulting metric have not been established. Changing the way distances are measured is a familiar device in spatial statistics, as in the deformation of \citet{sampson1992}, which maps the domain so that the field becomes isotropic in the new coordinates. Here the map is given in closed form by the tree itself.

A particular case of this device appears in the covariance models used on stream networks.
There each location is recorded by its upstream distance to the outlet, the most downstream
point of the network and itself a vertex of degree one
\citep{verhoef2006flow,ver2010moving}. Two locations are flow-connected when one of them lies on
the path of the other to the outlet, and flow-unconnected otherwise, and the two kinds of pair
are treated separately, which requires the edges to carry a direction. Averaging over all the
leaves handles the two kinds of pair in the same way, so that \eqref{eq:h} applies to a tree on
which no flow is defined, while a direction could be reintroduced by weighting the leaves
unequally.

\subsection{Comparison of the two metrics}

{Proposition \ref{thm:struct} describes} when the two metrics agree and the left-hand inequality of
\eqref{eq:bilip} says how much they can differ {at worst}. Between those two extremes the
contraction is governed by one number. We say that a leaf separates $s$ and $t$ when the path
coming from it reaches the path joining $s$ and $t$ at an interior point, rather than at $s$ or at
$t$, and we write $\kappa_{st}$ for the number of leaves that do so.

\begin{proposition}
\label{thm:local}
Let $\mathbb{T}$ be a compact metric tree with $\kappa$ leaves and let $s\neq t$. Then
\begin{equation}
\label{eq:local}
h_\kappa(s,t) \;\geq\; \big(1-\kappa_{st}/\kappa\big)^{1/2}\, d(s,t) .
\end{equation}
The left-hand inequality of \eqref{eq:bilip} is the case $\kappa_{st}=\kappa-2$, and the equality
of part (iii) is the case $\kappa_{st}=0$.
\end{proposition}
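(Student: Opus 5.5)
The plan is to classify each leaf $\xi_j$ by where its path meets the geodesic segment $[s,t]$, and to bound each term $|d(\xi_j,s)-d(\xi_j,t)|$ from below according to that class. Since $\mathbb{T}$ is a tree, for each leaf there is a unique point $p_j\in[s,t]$ (the projection, or gate) where the path from $\xi_j$ first meets $[s,t]$. Then $d(\xi_j,s)=d(\xi_j,p_j)+d(p_j,s)$ and $d(\xi_j,t)=d(\xi_j,p_j)+d(p_j,t)$, so
\[
|d(\xi_j,s)-d(\xi_j,t)| = |d(p_j,s)-d(p_j,t)|.
\]
If $p_j=s$ or $p_j=t$, this term equals $d(s,t)$ exactly. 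These are precisely the leaves that do not separate $s$ and $t$, and there are $\kappa-\kappa_{st}$ of them. For the separating leaves, $p_j$ is interior and the term is merely nonnegative, possibly zero when $p_j$ is the midpoint.

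Dropping the separating leaves from the sum in \eqref{eq:h} then gives
\[
h_\kappa(s,t)^2 \geq \frac{1}{\kappa}(\kappa-\kappa_{st})\,d(s,t)^2,
\]
which is \eqref{eq:local} after taking square roots. For the special cases I would reuse the argument already sketched after Proposition~\ref{thm:struct}. Extending the segment beyond $s$ away from $t$ (or taking $s$ itself if it is a leaf) reaches a leaf whose gate is $s$, and similarly at $t$. These two leaves are distinct, because the first lies on the $s$-side and the second on the $t$-side, so $\kappa-\kappa_{st}\ge 2$, that is, $\kappa_{st}\le\kappa-2$. Substituting the extreme value $\kappa_{st}=\kappa-2$ gives the factor $\sqrt{2/\kappa}$, so the left inequality of \eqref{eq:bilip} follows from \eqref{eq:local} as the worst case.

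For $\kappa_{st}=0$, every leaf has its gate at $s$ or $t$, so every term equals $d(s,t)$ and $h_\kappa=d$. To connect this with part (iii), I would observe that a vertex of degree at least three strictly inside $[s,t]$ has a branch leaving the segment, and following it ends at a leaf whose gate is that interior vertex. Conversely, an interior gate must be a branching vertex. Hence $\kappa_{st}=0$ exactly when no such vertex lies strictly between $s$ and $t$.

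The main point requiring care is the gate decomposition, namely the existence and uniqueness of $p_j$ and the additivity of distances through it. Both are standard in trees because the union of the paths from $\xi_j$ to $s$ and to $t$ is a tripod. The second delicate point is the distinctness of the two non-separating leaves when $s$ or $t$ is itself a leaf, or when $s$ lies at a vertex. Here I would argue via the component of $\mathbb{T}\setminus\{s\}$ not containing $t$, or via $s$ itself in the leaf case.
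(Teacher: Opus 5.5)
Your proof is correct and is essentially the paper's argument. Your gate $p_j$ is the paper's $m_j(s,t)$, and dropping the nonnegative terms of the separating leaves while keeping the $\kappa-\kappa_{st}$ terms equal to $d(s,t)$ is exactly the paper's bound $x_j(s,t)\{1-x_j(s,t)\}\le 1/4$ applied in the identity of Lemma~\ref{lem:id}. You also check $\kappa_{st}\le\kappa-2$ explicitly, which the paper leaves implicit and which is what lets the case $\kappa_{st}=\kappa-2$ be read as the worst case.
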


The proof is given in the Appendix. What governs the contraction is therefore the proportion of
leaves that separate the two points. In the extreme case $\kappa_{st}=\kappa-2$ the separating
leaves must reach the path at its midpoint, as in a star, while two points of a large tree whose
path crosses a few branching vertices have $\kappa_{st}$ small relative to $\kappa$ and are
measured almost as they are under the shortest-path metric.

Figure~\ref{fig:tree} uses a tree with $\kappa=4$ leaves placed so that two of them are close in the plane while far apart in the tree, and shows the distance from one of those two leaves, marked in white, to every other point of the tree under each metric. Several features can be noted. The leaf that lies next to the white point is the most distant point of the tree under $d$ and remains so under $h_\kappa$, so the ordering is the one given by the tree and not the one suggested by the plane. The scales differ, as Proposition \ref{thm:struct} states, and the right panel quantifies that difference. The spread of the cloud at a fixed value of $d$ is the range of $\kappa_{st}$ over the pairs at that distance, which is what Proposition \ref{thm:local} describes. Finally, the pairs lying within a single edge, in black, fall exactly on the diagonal, which is the last part of Proposition \ref{thm:struct}.
\begin{figure}
\centering
\includegraphics[width=\textwidth]{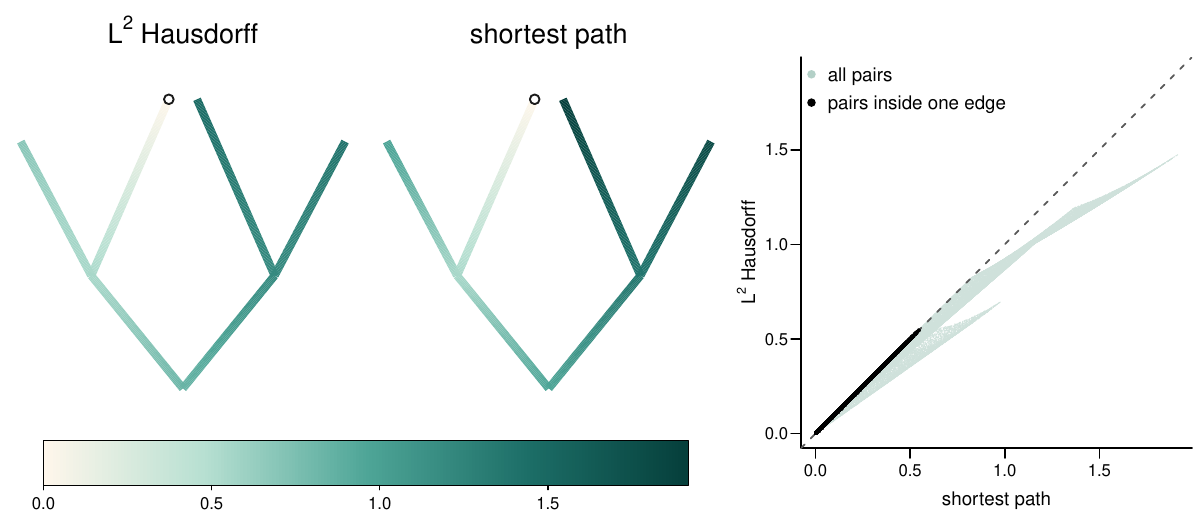}
\caption{A tree with $\kappa=4$ leaves. Left and
middle, the distance from the point marked by the white circle to every other point of the tree,
under $h_\kappa$ and under the shortest-path metric $d$. 
Right, $h_\kappa$ against $d$ over all pairs of points, with the pairs lying within a single edge
in black and the diagonal dashed.}
\label{fig:tree}
\end{figure}

\section{Catalogue of covariance functions on metric trees}
\label{sec:cov}

Throughout this section, $Z=\{Z(s),\,s\in\mathbb{T}\}$ denotes a real-valued random field on the
tree with finite second moments, and $C(s,t)=\mathrm{cov}\{Z(s),Z(t)\}$ denotes its covariance
function. Given a metric $\delta$ on $\mathbb{T}$, we say that $C$ is isotropic with respect to $\delta$
when $C(s,t)$ depends on the pair $(s,t)$ only through $\delta(s,t)$, which for $\delta=d$ is the
notion used by \cite{anderes2020}. The models constructed below are isotropic with respect to the
metric $h_\kappa$ of Section~\ref{sec:trees}, and in general they are not isotropic with respect to
the shortest-path metric $d$.

\subsection{A catalogue indexed by the number of leaves}

We write $\Phi_n$ for the continuous mappings $\varphi:[0,\infty)\rightarrow\mathbb{R}$ with $\varphi(0)=1$ such that $(\bm{t},\bm{s})\mapsto\varphi(\|\bm{t}-\bm{s}\|)$ is a correlation
function on $\mathbb{R}^n$, and $\Phi_\infty=\bigcap_{n\geq1}\Phi_n$. These classes decrease as $n$ grows, so that $\Phi_\infty$ is the smallest of them. The main result of this section is that composing any $\varphi\in\Phi_\kappa$ with $h_\kappa$ gives a valid covariance function on the tree, where $\kappa$ is the number of leaves.

\begin{theorem}
\label{thm:pd}
Let $\mathbb{T}$ be a compact metric tree with $\kappa$ leaves and let $h_\kappa$ be given by
\eqref{eq:h}. Then, for every $\varphi\in\Phi_\kappa$ and every $\sigma^2>0$,
\begin{equation}
\label{eq:model}
C(s,t) = \sigma^2\,\varphi\big(h_\kappa(s,t)\big), \qquad s,t\in\mathbb{T},
\end{equation}
is a covariance function on $\mathbb{T}$.
\end{theorem}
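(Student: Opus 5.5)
The plan is to realize $h_\kappa$ as a Euclidean distance in $\mathbb{R}^\kappa$ through the Fr\'echet embedding described after Proposition~\ref{thm:struct}, and then to pull back the Euclidean correlation. Define
\[
F:\mathbb{T}\to\mathbb{R}^\kappa,\qquad F(s)=\kappa^{-1/2}\big(d(\xi_1,s),\ldots,d(\xi_\kappa,s)\big).
\]
By \eqref{eq:h}, $h_\kappa(s,t)=\|F(s)-F(t)\|$ for all $s,t\in\mathbb{T}$, with $\|\cdot\|$ the Euclidean norm on $\mathbb{R}^\kappa$. Since $\varphi\in\Phi_\kappa$, the function $(\bm{x},\bm{y})\mapsto\varphi(\|\bm{x}-\bm{y}\|)$ is a correlation function on $\mathbb{R}^\kappa$, hence symmetric and nonnegative definite there.

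Next I check nonnegative definiteness of \eqref{eq:model} directly. Take points $s_1,\ldots,s_n\in\mathbb{T}$ and reals $a_1,\ldots,a_n$, and set $\bm{x}_i=F(s_i)$. Then
\[
\sum_{i,j}a_ia_j\,\sigma^2\varphi\big(h_\kappa(s_i,s_j)\big)=\sigma^2\sum_{i,j}a_ia_j\,\varphi(\|\bm{x}_i-\bm{x}_j\|)\geq0 .
\]
The inequality holds because the right-hand side is a quadratic form of a correlation function on $\mathbb{R}^\kappa$ evaluated at finitely many points. Coincident images $\bm{x}_i=\bm{x}_j$ cause no trouble, since nonnegative definiteness allows repeated points. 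In any case $F$ is injective, because $h_\kappa$ is a metric by Proposition~\ref{thm:struct}(i). Symmetry of $C$ is immediate.

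A symmetric nonnegative definite function is the covariance of some random field, for instance a Gaussian one by Kolmogorov's extension theorem, and this gives the conclusion. Continuity of $C$ on $\mathbb{T}\times\mathbb{T}$ also holds if wanted, since $\varphi$ is continuous and $h_\kappa\leq d$ by \eqref{eq:bilip}.

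No step is a serious obstacle. The only point needing care is the exact identity $h_\kappa=\|F(s)-F(t)\|$ with the $1/\kappa$ normalization absorbed into $F$. One should also note that the ambient dimension is exactly $\kappa$, which is why $\Phi_\kappa$, rather than a smaller class, suffices.
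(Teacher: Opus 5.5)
Your proposal is correct and is essentially the paper's proof: the paper uses the same map $\Psi(s)=\kappa^{-1/2}\big(d(\xi_1,s),\ldots,d(\xi_\kappa,s)\big)$, notes that $\|\Psi(s)-\Psi(t)\|=h_\kappa(s,t)$, and carries over the nonnegative quadratic form from $\mathbb{R}^\kappa$. Your remarks on repeated points, injectivity, Kolmogorov's extension theorem and continuity are correct but not needed for the argument.
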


The class that appears is $\Phi_\kappa$ because the proof places the tree inside $\mathbb{R}^\kappa$. The class is strictly larger than $\Phi_\infty$ and contains the compactly supported correlations and those that attain negative values. It also becomes larger as $\kappa$ decreases.

In particular every member of $\Phi_\infty$ is admissible on any metric tree. Table
\ref{tab:phi} lists three of its best known members, the powered exponential, the Mat\'ern and
the generalized Cauchy, with the conditions that define each of them in Euclidean spaces
\citep{cressie2015statistics} and no further restriction. The table also includes the cardinal
sine, the spherical and the Askey models, whose admissibility depends on the number of leaves,
the first two requiring $\kappa\leq3$.

\begin{table}
\tbl{Correlation models on a metric tree with $\kappa$ leaves. Membership of $\Phi_\kappa$ is
what Theorem \ref{thm:pd} requires, and the first three families belong to $\Phi_\infty$ and are
admissible on every tree. Here $\Gamma$ and $K_\nu$ are the gamma function and the modified
Bessel function of the second kind of order $\nu$, and $(x)_+=\max(x,0)$.}{\setlength{\tabcolsep}{12pt}
\renewcommand{\arraystretch}{1.45}
\begin{tabular}{@{}lll@{}}
\toprule
Family & $\varphi(r)$ & Conditions for $\varphi\in\Phi_\kappa$ \\
\midrule
Powered exponential & $\exp\big(-(r/a)^{\alpha}\big)$ & $a>0$, $0<\alpha\leq 2$ \\
Mat\'ern & $\big(2^{1-\nu}/\Gamma(\nu)\big)(r/a)^{\nu}K_\nu(r/a)$ & $a>0$, $\nu>0$ \\
Generalized Cauchy & $\big(1+(r/a)^{\alpha}\big)^{-\beta/\alpha}$ & $a>0$, $0<\alpha\leq 2$, $\beta>0$ \\
\midrule
Cardinal sine & $\sin(r/a)/(r/a)$ & $a>0$, $\kappa\leq 3$ \\
Spherical & $\big(1-3(r/a)/2+(r/a)^{3}/2\big)_{+}$ & $a>0$, $\kappa\leq 3$ \\
Askey & $(1-r/a)_{+}^{\tau}$ & $a>0$, $\tau\geq(\kappa+1)/2$ \\
\bottomrule
\end{tabular}}
\label{tab:phi}
\end{table}

Under the shortest-path metric the situation is different. Most of the models of Table
\ref{tab:phi} are not covariance functions when written in terms of $d$. \cite{anderes2020}
identified which members of the first three families are, namely the Mat\'ern with
$0<\nu\leq 1/2$ and the powered exponential and the generalized Cauchy with $0<\alpha\leq 1$.
These are the roughest members of each family, and the corresponding fields are nowhere
differentiable.

\subsection{Behaviour of the field along the edges}
\label{sec:edges}

 {The two metrics agree inside every edge, by Proposition \ref{thm:struct}, so} the restriction of \eqref{eq:model} to a single edge is $\sigma^2\varphi(d(s,t))$, which under the arc-length parametrization is the classical stationary model on an interval. The Euclidean theory therefore describes the local behaviour of the field along each edge \citep{stein1999,gneiting2004}.

\begin{corollary}
\label{cor:local}
Consider the model \eqref{eq:model}, and let $s$ and $t$ lie in a common edge. Under the Mat\'ern model the field is $m$ times mean square differentiable along that edge if and only if $\nu>m$. Its sample paths there have fractal index $2\nu$ for $\nu<1$ under the Mat\'ern model, and $\alpha$ under the powered exponential and generalized Cauchy models.
\end{corollary}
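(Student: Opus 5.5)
The plan is to reduce everything to the classical theory of stationary processes on an interval, using Proposition \ref{thm:struct}(iii). Fix an edge $e$ of length $\ell$ and parametrize it by arc length, $x\mapsto s(x)$ for $x\in[0,\ell]$. For $s(x),s(y)\in e$, part (iii) gives $h_\kappa(s(x),s(y))=d(s(x),s(y))=|x-y|$. Hence the process $Y(x)=Z(s(x))$ has covariance $\sigma^2\varphi(|x-y|)$ on $[0,\ell]$. Mean square differentiability and the fractal index of sample paths are local properties: both depend only on the covariance in a neighbourhood of the diagonal. So the field restricted to $e$ has exactly the properties of the Euclidean stationary process on $\mathbb{R}$ with correlation $\varphi$, restricted to $[0,\ell]$.

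For the first claim, I would use the standard criterion that a stationary process on $\mathbb{R}$ with correlation $\varphi(|x|)$ is $m$ times mean square differentiable if and only if $\varphi(|\cdot|)$ has a finite derivative of order $2m$ at the origin. Equivalently, the spectral density $f$ satisfies $\int \omega^{2m} f(\omega)\,d\omega<\infty$. For the Matérn model in dimension one, $f(\omega)\propto(a^{-2}+\omega^2)^{-\nu-1/2}$. The integral $\int\omega^{2m}(1+\omega^2)^{-\nu-1/2}d\omega$ converges if and only if $2m-2\nu-1<-1$, that is $\nu>m$. This is the classical statement in \citet{stein1999}.

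For the fractal index, I would use the local expansion $1-\varphi(r)\sim c\,r^{\alpha}$ as $r\to0$. For a Gaussian process with this behaviour, the sample paths have fractal dimension $2-\alpha/2$ on $\mathbb{R}$, so the fractal index is $\alpha$ \citep{gneiting2004}. The remaining step is to read off the exponent for each model.
\begin{itemize}
\item For the powered exponential and the generalized Cauchy, a Taylor expansion gives $1-\varphi(r)\sim c\,(r/a)^{\alpha}$, since $1-(1+u)^{-\beta/\alpha}\sim(\beta/\alpha)u$. The index is therefore $\alpha$.
\item For the Matérn with $\nu<1$, the small-argument expansion of $K_\nu$ gives $1-\varphi(r)\sim c_\nu (r/a)^{2\nu}$ with $c_\nu=\Gamma(1-\nu)/\{2^{2\nu}\Gamma(1+\nu)\}$. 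The index is therefore $2\nu$.
\end{itemize}

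The one subtle point is justifying that these local properties transfer from $\mathbb{R}$ to the restriction on a closed edge. This matters in particular at the endpoints, where the edge meets branching vertices. Mean square derivatives at an endpoint of $[0,\ell]$ are one-sided, but they are still determined by $\varphi$ near zero, because every pair of points involved lies in $e$. The fractal index is likewise defined through the behaviour of increments within the edge, so part (iii) alone suffices and no property of $h_\kappa$ across vertices is needed. I expect the only real care to be in stating these Euclidean results precisely and citing them, rather than in any new argument.
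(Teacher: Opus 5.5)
Your proposal is correct and follows the paper's own argument. The paper likewise uses Proposition \ref{thm:struct}(iii) to identify the restriction of \eqref{eq:model} to an edge, under arc-length parametrization, with the classical stationary model $\sigma^2\varphi(|x-y|)$ on an interval, and then relies on the Euclidean theory of \citet{stein1999} and \citet{gneiting2004}; your spectral-moment computation and small-argument expansions (including the Mat\'ern constant $\Gamma(1-\nu)/\{2^{2\nu}\Gamma(1+\nu)\}$) simply make explicit what the paper leaves to those citations.
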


In Euclidean spaces, the coefficient $\beta$ of the generalized Cauchy family describes the decay of the correlation at arbitrarily large separations (Hurst parameter) \citep{gneiting2004}. The trees considered in this paper are bounded, so that interpretation is not available.

Figure~\ref{fig:covreal} shows the covariance on a tree with three leaves, for three members of
the Mat\'ern family in terms of $h_\kappa$. Here $a$ denotes the practical range rather than the
scale of Table \ref{tab:phi}, that is, $r/a$ is replaced by $c_\nu\,r/a$, where $c_\nu$ solves
$(2^{1-\nu}/\Gamma(\nu))c^{\nu}K_\nu(c)=0.05$, so that the correlation equals $0.05$ at $r=a$
and holding $a$ fixed leaves the smoothness as the only difference between the three. At
$\nu=1/2$ the surface has a sharp peak at the point marked in red and decreases linearly with
the distance near it, while at $\nu=3/2$ and $\nu=5/2$ it is smooth there and decreases
quadratically at short distances, which is the local behaviour described in Corollary
\ref{cor:local}. The last two are not admissible under the shortest-path metric.

\begin{figure}
\centering
\includegraphics[width=\textwidth]{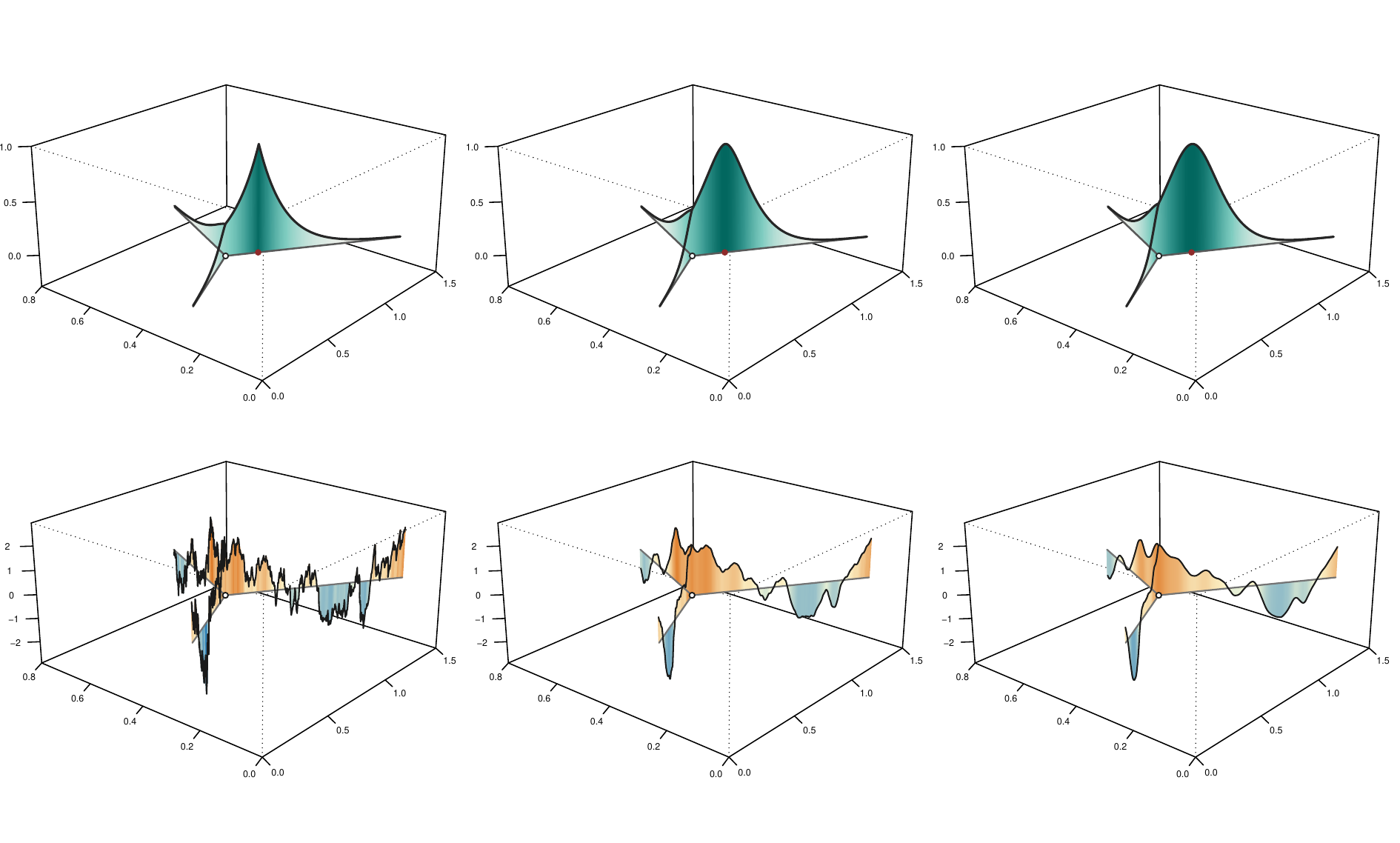}
\caption{The Mat\'ern model on a tree with three leaves, for $\nu=1/2$, $3/2$ and $5/2$, from
left to right. Top, the covariance $t\mapsto\varphi(h_\kappa(s,t))$ with $s$ at the red point.
Bottom, a realization of the field. The tree is drawn at zero height, and all six panels share
the same practical range, the three realizations sharing also the same random seed.}
\label{fig:covreal}
\end{figure}

The lower row of the same figure shows a realization of the field under each of the three
models, the three panels sharing the same random seed. At $\nu=1/2$ the sample path varies erratically along each edge;
at $\nu=3/2$ it is once differentiable and at $\nu=5/2$ twice, and the large-scale features are
preserved while the roughness decreases.

A tree with three leaves describes a single branching point, as at the confluence of two
streams or at the bifurcation of an axon or of a root. Figure~\ref{fig:covtree} shows the
cardinal sine and the spherical models on such a tree. The cardinal sine changes sign, so the
covariance is negative at intermediate distances and returns to zero at larger ones, which is
the hole effect. The spherical model is never negative and is zero beyond a finite distance.
These two shapes are not found in $\Phi_\infty$, and on a tree with a branching point neither
model is admissible under the shortest-path metric.

\begin{figure}
\centering
\includegraphics[width=0.92\textwidth]{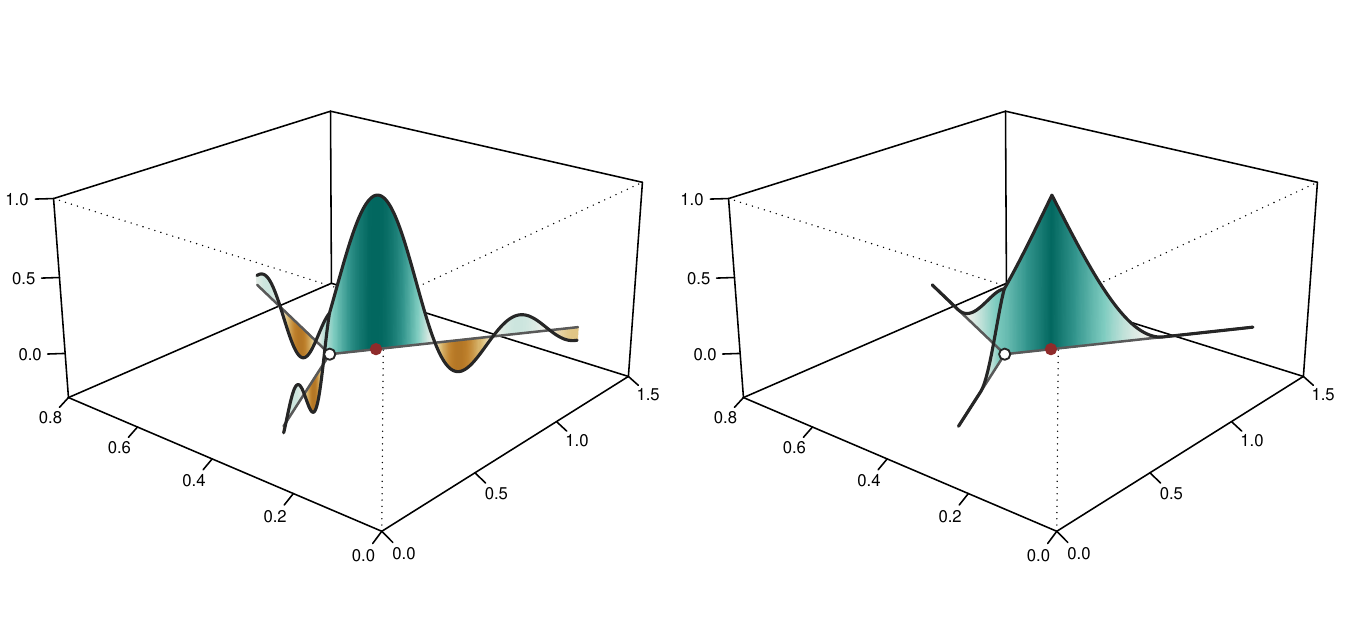}
\caption{The covariance $t\mapsto\varphi(h_\kappa(s,t))$ on a tree with three leaves, under two
models of Table \ref{tab:phi} that do not belong to $\Phi_\infty$. The tree is drawn at zero
height, the red point is $s$ and the white point is the vertex at which the tree branches. Left,
the cardinal sine, which takes negative values along the parts shown in brown. Right, the
spherical model, which vanishes beyond a finite distance.}
\label{fig:covtree}
\end{figure}

\subsection{Multivariate and space-time models}
\label{sec:ext}

{The results of this subsection complement Theorem \ref{thm:pd}.} The construction does not require the covariance to be scalar or purely spatial, since any statement of nonnegative definiteness available in $\mathbb{R}^\kappa$ holds on $\mathbb{T}$ as well. Two cases of practical interest follow. In the first, the
field is $p$-variate, $\bm{Z}=(Z_1,\hdots,Z_p)^\top$, and the object to be specified is the
matrix-valued covariance function whose $(i,j)$ entry is $\mathrm{cov}\{Z_i(s),Z_j(t)\}$. In the
second, the field is indexed by a point of the tree and a time, $\{Z(s,u),\,s\in\mathbb{T},\,
u\in\mathbb{R}\}$, and the object to be specified is a covariance function on
$\mathbb{T}\times\mathbb{R}$.

\begin{corollary}
\label{cor:multi}
Let $\bm{\varphi}=(\varphi_{ij})_{i,j=1}^{p}$ be a mapping from $[0,\infty)$ into the $p\times p$
real matrices such that $(\bm{x},\bm{y})\mapsto\bm{\varphi}(\|\bm{x}-\bm{y}\|)$ is a matrix-valued
covariance function on $\mathbb{R}^\kappa$. Then
\begin{equation}
\label{eq:multi}
\bm{C}(s,t) \;=\; \bm{\varphi}\big(h_\kappa(s,t)\big), \qquad s,t\in\mathbb{T},
\end{equation}
is a matrix-valued covariance function on $\mathbb{T}$, so that there exists a $p$-variate
random field $\bm{Z}=(Z_1,\hdots,Z_p)^\top$ on $\mathbb{T}$ with
$\mathrm{cov}\{Z_i(s),Z_j(t)\}=\varphi_{ij}(h_\kappa(s,t))$ for every $i,j=1,\hdots,p$.
\end{corollary}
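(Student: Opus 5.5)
The argument goes through the Fréchet embedding by the leaves, exactly as for Theorem \ref{thm:pd}. Define $F:\mathbb{T}\to\mathbb{R}^\kappa$ by
\[
F(s)=\kappa^{-1/2}\big(d(\xi_1,s),\ldots,d(\xi_\kappa,s)\big)^\top .
\]
By Definition \ref{def:h}, $\|F(s)-F(t)\|=h_\kappa(s,t)$ for all $s,t\in\mathbb{T}$. Hence
\[
\bm{C}(s,t)=\bm{\varphi}\big(\|F(s)-F(t)\|\big)=\bm{K}\big(F(s),F(t)\big),
\]
where $\bm{K}(\bm{x},\bm{y})=\bm{\varphi}(\|\bm{x}-\bm{y}\|)$ is the given matrix-valued covariance on $\mathbb{R}^\kappa$. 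So $\bm{C}$ is the pullback of a valid matrix-valued kernel under a map. Injectivity of $F$ is not needed for this, although it holds by Proposition \ref{thm:struct}(i).

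The key step is to check the defining property of a matrix-valued covariance function. We need:
\begin{itemize}
\item[(a)] $\bm{C}(s,t)=\bm{C}(t,s)^\top$;
\item[(b)] for every $n$, every $s_1,\ldots,s_n\in\mathbb{T}$ and every $\bm{a}_1,\ldots,\bm{a}_n\in\mathbb{R}^p$, the quadratic form $\sum_{k,l}\bm{a}_k^\top\bm{C}(s_k,s_l)\bm{a}_l$ is nonnegative.
\end{itemize}
Set $\bm{x}_k=F(s_k)$. Then the quadratic form in (b) equals $\sum_{k,l}\bm{a}_k^\top\bm{K}(\bm{x}_k,\bm{x}_l)\bm{a}_l$, which is nonnegative because $\bm{K}$ is nonnegative definite on $\mathbb{R}^\kappa$. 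Repeated points $\bm{x}_k=\bm{x}_l$ cause no difficulty, since nonnegative definiteness on arbitrary finite point sets allows repetitions. Equivalently, the points can be merged by summing the corresponding coefficient vectors. Property (a) follows from the same property of $\bm{K}$ together with symmetry of $h_\kappa$.

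For the existence of the random field, take a $p$-variate Gaussian field $\bm{W}$ on $\mathbb{R}^\kappa$ with covariance $\bm{K}$. It exists by Kolmogorov's extension theorem, since $\bm{K}$ is a valid matrix-valued covariance. Then define $\bm{Z}(s)=\bm{W}(F(s))$. Its cross-covariances are
\[
\mathrm{cov}\{Z_i(s),Z_j(t)\}=\varphi_{ij}\big(\|F(s)-F(t)\|\big)=\varphi_{ij}\big(h_\kappa(s,t)\big),
\]
as claimed. This construction also gives the nonnegative definiteness in (b) directly, so that check can be bypassed.

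There is no real obstacle here. The one point requiring care is the isometry $\|F(s)-F(t)\|=h_\kappa(s,t)$, including the normalising factor $\kappa^{-1/2}$. This is immediate from \eqref{eq:h}, because each coordinate difference is $\kappa^{-1/2}\big(d(\xi_j,s)-d(\xi_j,t)\big)$.
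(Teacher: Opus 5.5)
Your proof is correct and follows the paper's argument: your $F$ is exactly the map $\Psi$ of \eqref{eq:map}, and the paper likewise turns the quadratic form with vector coefficients $\bm{a}_i$ into one at the points $\bm{t}_i=\Psi(s_i)\in\mathbb{R}^\kappa$, which is nonnegative by hypothesis. Your check of the symmetry $\bm{C}(s,t)=\bm{C}(t,s)^\top$ and your explicit construction of the field as $\bm{Z}(s)=\bm{W}(F(s))$ are details the paper leaves implicit.
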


The multivariate Mat\'ern model of \cite{gneiting2010matern} is valid in \eqref{eq:multi} under the conditions that make it a covariance function on $\mathbb{R}^\kappa$. These conditions
depend on the dimension, so the number of leaves determines which parameter values are admissible,
and there is no upper bound on the smoothness parameters $\nu_{ij}$. The bivariate case admits an
explicit statement.

\begin{example}
\label{ex:bimatern}
Let $p=2$ and write $\mathcal{M}(r\,|\,\nu,a)=\big(2^{1-\nu}/\Gamma(\nu)\big)(r/a)^{\nu}K_\nu(r/a)$
for the Mat\'ern correlation of Table \ref{tab:phi} with smoothness $\nu$ and scale $a$. In the
bivariate Mat\'ern model each entry of \eqref{eq:multi} is a Mat\'ern correlation with its own
smoothness and its own scale,
\[
\bm{C}(s,t)=
\begin{pmatrix}
\sigma_1^2\,\mathcal{M}(h_\kappa(s,t)\,|\,\nu_{11},a_{11}) &
\rho\,\sigma_1\sigma_2\,\mathcal{M}(h_\kappa(s,t)\,|\,\nu_{12},a_{12})\\[3pt]
\rho\,\sigma_1\sigma_2\,\mathcal{M}(h_\kappa(s,t)\,|\,\nu_{12},a_{12}) &
\sigma_2^2\,\mathcal{M}(h_\kappa(s,t)\,|\,\nu_{22},a_{22})
\end{pmatrix},
\]
where $\sigma_1^2,\sigma_2^2>0$ and $\rho\in[-1,1]$. Part (d) of Theorem 3 of
\cite{gneiting2010matern} covers the case
\begin{equation}
\label{eq:bimatern}
\nu_{12}=\tfrac{1}{2}(\nu_{11}+\nu_{22}),
\qquad
a_{12}\;\leq\;\min(a_{11},a_{22}),
\end{equation}
in which the model is a matrix-valued covariance function on $\mathbb{R}^\kappa$, and therefore on
$\mathbb{T}$ by Corollary \ref{cor:multi}, if and only if
\begin{equation}
\label{eq:bimatern2}
|\rho| \;\leq\;
\left(\frac{a_{12}}{a_{11}}\right)^{\!\nu_{11}}
\left(\frac{a_{12}}{a_{22}}\right)^{\!\nu_{22}}
\left\{\frac{\Gamma(\nu_{11}+\kappa/2)}{\Gamma(\nu_{11})}\,
\frac{\Gamma(\nu_{22}+\kappa/2)}{\Gamma(\nu_{22})}\right\}^{1/2}
\frac{\Gamma(\nu_{12})}{\Gamma(\nu_{12}+\kappa/2)} .
\end{equation}
The number of leaves enters \eqref{eq:bimatern2} only through the gamma factors, which equal one
when $\nu_{11}=\nu_{22}$ and decrease as $\kappa$ grows and as the two marginal smoothness
parameters separate. The scales contribute a factor that does not depend on $\kappa$ and is at most
one by \eqref{eq:bimatern}. Equal scales and equal smoothness
place no restriction on $\rho$, and setting $a_{11}=a_{22}=a_{12}$ gives the parsimonious model,
whose bound does not involve the scale.

\end{example}

\begin{corollary}
\label{cor:st}
Let $\widetilde{\varphi}$ be a mapping from $[0,\infty)^2$ into $\mathbb{R}$ such that
$\big((\bm{x},u),(\bm{y},w)\big)\mapsto\widetilde{\varphi}(\|\bm{x}-\bm{y}\|,|u-w|)$ is a covariance
function on $\mathbb{R}^\kappa\times\mathbb{R}$. Then
\begin{equation}
\label{eq:st}
C\big((s,u),(t,w)\big) \;=\; \widetilde{\varphi}\big(h_\kappa(s,t),|u-w|\big),
\qquad (s,u),(t,w)\in\mathbb{T}\times\mathbb{R},
\end{equation}
is a covariance function on $\mathbb{T}\times\mathbb{R}$, so that there exists a
random field $\{Z(s,u),\,s\in\mathbb{T},\,u\in\mathbb{R}\}$ with
$\mathrm{cov}\{Z(s,u),Z(t,w)\}=\widetilde{\varphi}\big(h_\kappa(s,t),|u-w|\big)$.
\end{corollary}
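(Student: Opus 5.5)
The plan is to embed the tree isometrically (up to a constant factor) into Euclidean space and pull back the space-time covariance, exactly as in Theorem \ref{thm:pd}. Define the Fr\'echet-type map
\[
F:\mathbb{T}\to\mathbb{R}^\kappa,\qquad F(s)=\kappa^{-1/2}\big(d(\xi_1,s),\ldots,d(\xi_\kappa,s)\big)^\top .
\]
By Definition \ref{def:h}, $\|F(s)-F(t)\|=h_\kappa(s,t)$ for all $s,t\in\mathbb{T}$. Then extend $F$ to the product by $G(s,u)=(F(s),u)\in\mathbb{R}^\kappa\times\mathbb{R}$, so that
\[
\widetilde{\varphi}\big(\|F(s)-F(t)\|,|u-w|\big)=\widetilde{\varphi}\big(h_\kappa(s,t),|u-w|\big).
\]

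Next, verify nonnegative definiteness directly. Take finitely many points $(s_1,u_1),\ldots,(s_n,u_n)\in\mathbb{T}\times\mathbb{R}$ and real coefficients $c_1,\ldots,c_n$. Put $\bm{x}_i=F(s_i)$. Then
\[
\sum_{i,k}c_ic_k\,C\big((s_i,u_i),(s_k,u_k)\big)=\sum_{i,k}c_ic_k\,\widetilde{\varphi}\big(\|\bm{x}_i-\bm{x}_k\|,|u_i-u_k|\big)\geq 0 .
\]
The inequality holds because $\widetilde{\varphi}(\|\cdot\|,|\cdot|)$ is a covariance function on $\mathbb{R}^\kappa\times\mathbb{R}$ and $(\bm{x}_i,u_i)$ are points of that space. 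Coincident images cause no difficulty: $F$ is injective by Proposition \ref{thm:struct}(i), and in any case the quadratic form is evaluated at a finite point configuration, repeated or not. Symmetry of $C$ is immediate from the symmetry of $h_\kappa$ and of $|u-w|$.

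Finally, existence of the random field follows from Kolmogorov's extension theorem. A symmetric nonnegative definite kernel on $\mathbb{T}\times\mathbb{R}$ is the covariance of a zero-mean Gaussian field, since the finite-dimensional Gaussian laws it specifies are consistent.

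The only real content is the identity $\|F(s)-F(t)\|=h_\kappa(s,t)$, which holds by construction, so no genuine obstacle is expected. The main point is to check that the Euclidean space-time hypothesis is applied at the image points $(F(s_i),u_i)$ with the time coordinate left unchanged. The same argument, using matrix coefficients, also gives Corollary \ref{cor:multi}.
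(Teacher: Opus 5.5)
Your proposal is correct and matches the paper's proof: the paper uses the same map $\Psi(s)=\kappa^{-1/2}\big(d(\xi_1,s),\ldots,d(\xi_\kappa,s)\big)$ with $\|\Psi(s)-\Psi(t)\|=h_\kappa(s,t)$, keeps the time coordinate unchanged, and concludes because the points $(\Psi(s_i),u_i)$ lie in $\mathbb{R}^\kappa\times\mathbb{R}$. Your remarks on symmetry and on Kolmogorov's extension theorem for the existence of the field make explicit a step the paper leaves implicit.
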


The class of \cite{gneiting2002nonseparable} gives a concrete family. If $\eta$ is completely monotone on $[0,\infty)$ and $\psi$ is positive with a completely monotone derivative, two conditions for which \cite{gneiting2002nonseparable} provides definitions and examples, then
$$C\big((s,u),(t,w)\big) \;=\; \frac{\sigma^2}{[\psi\big(h_\kappa^2(s,t)\big)]^{1/2}}\,
\eta\!\left(\frac{(u-w)^2}{\psi\big(h_\kappa^2(s,t)\big)}\right) $$
is a covariance function on $\mathbb{T}\times\mathbb{R}$ for every metric tree, with no restriction on the number of leaves. Writing the two arguments in the opposite roles, as in \cite{gneiting2002nonseparable}, would give the
exponent $\kappa/2$. In this form $\psi$ governs the purely
spatial behaviour and $\eta$ the purely temporal one.

\begin{example}
\label{ex:st}
A concrete choice is $\eta(r)=\exp(-c\,r^{\gamma})$, with $c>0$ and $0<\gamma\leq1$, together with $\psi(r)=\{1+(r/b^{2})^{\alpha}\}^{\beta}$, with $b>0$,
$0<\alpha\leq1$ and $0<\beta\leq1$. Both
appear in Tables 1 and 2 of \cite{gneiting2002nonseparable}. They give
$$ C\big((s,u),(t,w)\big) \;=\; \frac{\sigma^2}
{[1+\{h_\kappa(s,t)/b\}^{2\alpha}]^{\beta/2}}\,
\exp\!\left(-\,\frac{c\,|u-w|^{2\gamma}}
{[1+\{h_\kappa(s,t)/b\}^{2\alpha}]^{\beta\gamma}}\right),$$
which is a covariance function on $\mathbb{T}\times\mathbb{R}$ for every metric tree. Setting
$u=w$ gives the generalized Cauchy correlation of Table \ref{tab:phi} in the spatial argument, and setting $s=t$
gives the powered exponential correlation in time. The parameter $\beta$
governs the interaction between the two arguments, and the limiting value $\beta=0$ gives the
separable model, the product of those two correlations.
\end{example}

Corollaries \ref{cor:multi} and \ref{cor:st} are proved in the Appendix, together with Theorem
\ref{thm:pd}. Corollary \ref{cor:local} applies unchanged to \eqref{eq:multi} and \eqref{eq:st},
so the smoothness along each edge is available there too. Sections~\ref{sec:sim} and~\ref{sec:data} keep to the univariate and purely spatial case, where the comparison with the
shortest-path metric is most direct.

\section{Simulation study}
\label{sec:sim}

This section studies the models of Section~\ref{sec:cov} on simulated data. We first examine the
accuracy with which maximum likelihood estimates the parameters of the Mat\'ern model, and then
the consequences of fitting each metric to data generated from the other.

Throughout we work on the tree of Figure~\ref{fig:tree}, which has six edges, $\kappa=4$ leaves and
total length $2.78$, observed at $278$ points placed along the edges at a spacing of $0.01$. The
largest value of $h_\kappa$ over that configuration is $1.47$. 

\subsection{Maximum likelihood estimation}
\label{sec:sim-mle}

Here $a$ is the practical range of the Mat\'ern correlation, as in Section~\ref{sec:edges}. The two
parametrizations differ by the factor $c_\nu$, which depends only on $\nu$, so the relative
error reported below is the same under either one.
The field is Gaussian and {has covariance} \eqref{eq:model}, with zero mean, unit variance, no nugget effect and the Mat\'ern correlation above, and each realization is obtained from the Cholesky factor of the covariance matrix.
Nine scenarios combine $\nu\in\{1/2,3/2,5/2\}$ with $a\in\{0.2,0.6,1.0\}$. Each
scenario is replicated $100$ times, and $\sigma^2$, $a$ and $\nu$ are estimated jointly by maximum
likelihood.

Figure~\ref{fig:sim} reports the relative error of each estimate. They are all
{centred} at zero, so the three parameters are estimated without appreciable bias at every
degree of smoothness and at every range, including $\nu=3/2$ and $\nu=5/2$, for which no model is
available under the shortest-path metric.

The range affects the three parameters in opposite directions. The errors of $\sigma^2$ and of $a$
grow as $a$ grows, which is the familiar deterioration of likelihood estimation when the range
approaches the size of the domain. The errors of $\nu$ are the smallest throughout and shrink
instead, both as the field becomes smoother and as the range grows, because a long range places
most pairs of points at separations where the behaviour of the correlation near the origin governs
the likelihood. 

\begin{figure}
\centering
\includegraphics[width=\textwidth]{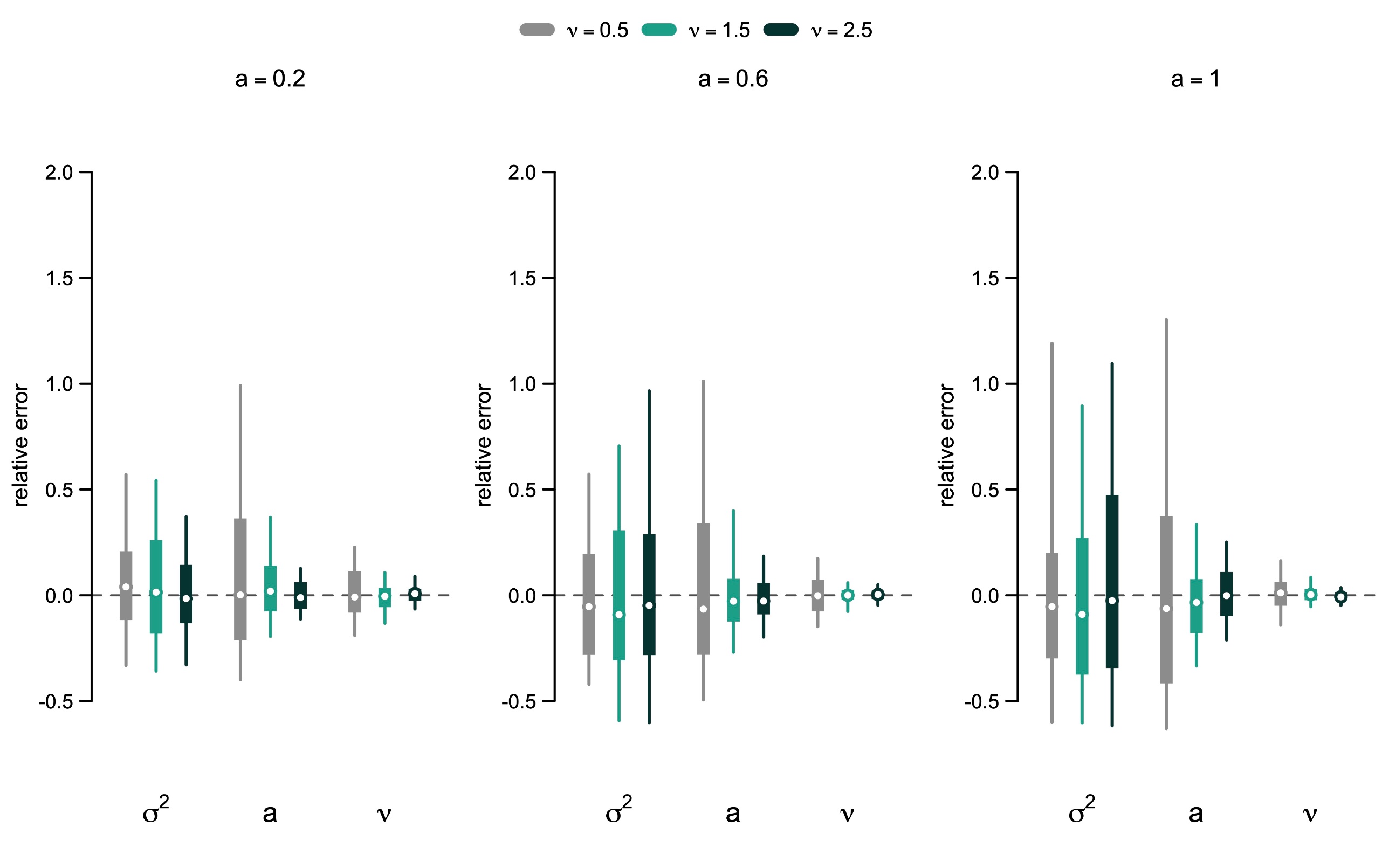}
\caption{Relative error $(\hat\theta-\theta)/\theta$ of the maximum likelihood estimates of the variance
$\sigma^2$, the range $a$ and the smoothness $\nu$, over $100$ replications. The point is the
median, the thick segment the central half of the distribution and the thin one its central ninety
per cent. The dashed line marks the absence of error.}
\label{fig:sim}
\end{figure}

\subsection{Misspecification study}
\label{sec:sim-mis}

The design is the one
of Section~\ref{sec:sim-mle}, with the field generated from $h_\kappa$ in nine scenarios and from
the shortest-path metric in three more, the latter restricted to $\nu=1/2$, the maximum admissible
value there. Each realization is then fitted twice, once with $h_\kappa$ and $\nu$ free, and once
with the shortest-path metric and $\nu=1/2$, so that the second model estimates two parameters and
the first three. Table \ref{tab:mis} reports the mean Akaike information criterion of each fit,
their mean difference and the percentage of replications in which the model built from $h_\kappa$
is selected.

When the field is smooth, the shortest-path metric is
unable to represent it and loses by hundreds of points of the criterion, by more as the field
becomes smoother and as the range grows, and it is never selected. When the field is generated
from the shortest-path metric, the model built from $h_\kappa$ loses by about two points, which is
the price of its additional parameter, so that the two fits are equivalent in likelihood and the
criterion simply prefers the smaller model. The same happens at $\nu=1/2$ under $h_\kappa$, where
the two metrics describe the data equally well. The cost of using $h_\kappa$ when the shortest-path metric would have sufficed is therefore
one additional parameter, and the cost of using the shortest-path metric on a smooth field is a
model that cannot describe it.

The last column shows that the misspecification does not affect the smoothness. When the field is
generated from the shortest-path metric and fitted with $h_\kappa$, the median estimate of $\nu$ is $1/2$
in all three scenarios. The two metrics agree inside every edge, and the smoothness is identified
from the behaviour of the correlation at short separations, so it is unaffected by a metric that
differs only across branching points.

\begin{table}
\tbl{Mean AIC of the two fitted models over $100$ replications, their mean difference, the
percentage of replications in which the model built from $h_\kappa$ is selected, and the median of the estimates of $\nu$ under $h_\kappa$. The field is
generated from $h_\kappa$ in the upper block and from the shortest-path metric in the lower one.
The model fitted under $h_\kappa$ estimates $\sigma^2$, $a$ and $\nu$; the one fitted under $d$
estimates $\sigma^2$ and $a$ with $\nu=1/2$, the maximum admissible value.}{\begin{tabular}{@{}llrrrrrr@{}}
\toprule
generated & $\nu$ & $a$ & AIC under $h_\kappa$ & AIC under $d$ & difference & selected (\%) &  $\hat\nu$ \\
\midrule
$h_\kappa$ & $1/2$ & 0.2 & $421.0$ & $420.2$ & $-0.781$ & 28 & $0.496$ \\
 &  & 0.6 & $133.4$ & $132.6$ & $-0.780$ & 25 & $0.496$ \\
 &  & 1.0 & $-7.17$ & $-7.621$ & $-0.453$ & 33 & $0.500$ \\
 & $3/2$ & 0.2 & $-304.7$ & $-73.27$ & $231.4$ & 100 & $1.501$ \\
 &  & 0.6 & $-1148$ & $-662.4$ & $485.8$ & 100 & $1.500$ \\
 &  & 1.0 & $-1561$ & $-957.5$ & $603.8$ & 100 & $1.504$ \\
 & $5/2$ & 0.2 & $-926.9$ & $-220.1$ & $706.7$ & 100 & $2.520$ \\
 &  & 0.6 & $-2301$ & $-829.2$ & $1472$ & 100 & $2.505$ \\
 &  & 1.0 & $-2973$ & $-1129$ & $1844$ & 100 & $2.482$ \\
\midrule
$d$ & $1/2$ & 0.2 & $418.2$ & $416.8$ & $-1.379$ & 17 & $0.503$ \\
 &  & 0.6 & $133.0$ & $131.2$ & $-1.821$ & 13 & $0.503$ \\
 &  & 1.0 & $-1.647$ & $-3.179$ & $-1.531$ & 22 & $0.500$ \\
\bottomrule
\end{tabular}}
\label{tab:mis}
\end{table}

\section{Application to the local radius of a microglial cell}
\label{sec:data}

The cell of Figure~\ref{fig:data} is a microglial cell from the main olfactory bulb of an adult female mouse, reconstruction NMO\_210282 of the Siegert archive at NeuroMorpho.Org \citep{ascoli2007}. It was produced by immunostaining for Iba1 and confocal imaging of a
$100\,\mu$m coronal slice, and traced with Imaris. The shape of microglial cells is itself the subject of the study from which the reconstruction comes \citep{colombo2022}, and the animal is from a model of Alzheimer-like degeneration.

\begin{figure}
\centering
\includegraphics[width=0.5\textwidth]{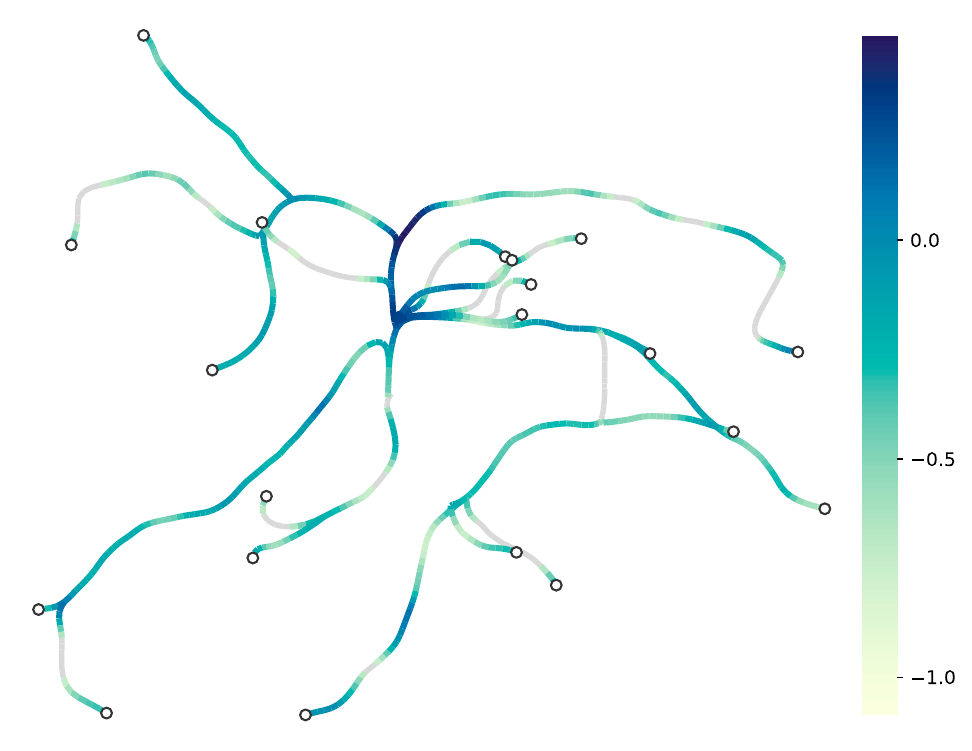}
\caption{The reconstructed microglial cell. Colour gives the logarithm of the local radius in micrometres. The grey parts are the nodes carrying the smallest recorded radius, discarded as a detection limit of the tracing, and the circles mark the $\kappa=20$ leaves.}
\label{fig:data}
\end{figure}

The reconstruction is supplied in the SWC format, in which each node records its position, its local radius and the identity of its parent node, so that the parent pointers define the topology. We reserve the word node for these recording points, and the word vertex for
the branching points and the leaves of the metric tree; a node that is neither a branching point nor
a leaf is an interior point of an edge. The tree has $723$ nodes and $20$ leaves, and spans $245\,\mu$m of process with an average diameter of $1.47\,\mu$m. We use the projection of
the reconstruction onto the $xy$ plane, which preserves the topology and reduces the total length by about fifteen per cent, so that the ranges reported below are in micrometres in that plane.

Ninety-six nodes carry the smallest recorded radius, $0.450\,\mu$m; the next distinct value is $0.452\,\mu$m. They form twenty runs distributed over the tree rather than concentrated at the tips, which indicates a detection limit of the tracing rather than a genuine calibre. We discard them, leaving $627$ nodes and twenty portions of the tree without data, which are predicted below. Write $r(s)$ for the local
radius at $s\in\mathbb{T}$, so that the data are the values $r(s_1),\hdots,r(s_n)$ at the $n=627$
retained nodes $s_1,\hdots,s_n$. The radius is positive and varies smoothly along each process, and at a branching point the daughter processes scale relative to the parent, a relation described by \citet{rall1959}. Growth along the tree is therefore multiplicative rather than additive, which places the natural
model on the logarithmic scale. The
marginal distribution of the data supports the choice, since the radius has skewness $1.00$ and
kurtosis $4.33$ while its logarithm has $0.20$ and $2.82$. We model $Y(s)=\log r(s)$, $s\in\mathbb{T}$, as a Gaussian random field.

We fit $Y(s) = \mu + Z(s)$, with the covariance structure
\begin{equation}
\label{eq:fit}
\mathrm{cov}\{Z(s),Z(t)\}  =  \sigma^2\big\{(1-\gamma)\,\varphi\big(\delta(s,t)\big) + \gamma\,\mathbb{I}[s=t]\big\}, \quad s,t\in\mathbb{T},
\end{equation}
with $\delta$ either $h_{\kappa}$ or $d$, and $\varphi$ the Mat\'ern correlation of Table \ref{tab:phi} with $\nu$ fixed. The parameters $\mu$, $\sigma^2$, $a$ and the nugget fraction $\gamma$ are estimated by maximum likelihood. Three choices of $\nu$ are used with $h_{\kappa}$, namely $1/2$, $3/2$ and $5/2$. With $d$ only $\nu=1/2$ is used, and that model is the benchmark.

Predictive performance is measured by leave-one-out cross-validation, each node being predicted from the remaining ones by ordinary kriging. As a reference we also report the error obtained by predicting each node by the average of the others.

Table \ref{tab:fit} collects the fits and the cross-validated errors. First, it shows that the variable is strongly structured along the tree, since any of the four models reduces the prediction error by an order of magnitude relative to the uncorrelated reference. The benchmark based on the shortest-path distance already achieves most of this reduction, which is expected, since the exponential model is a reasonable description of a quantity that varies continuously along
each process.

\begin{table}
\tbl{Mat\'ern models fitted by maximum likelihood with standard errors in parentheses, and their leave-one-out prediction errors. The nugget effect
is estimated at zero in every model and is not shown. The practical range is in micrometres and the remaining entries refer to the logarithm of the radius.}{\begin{tabular}{@{}llcccccc@{}}
\toprule
distance & $\nu$ & mean & variance  & practical range & log-likelihood & MAE & RMSE \\
\midrule
$h_\kappa$ & $1/2$  & $-0.212$ (0.173)  & 0.135 (0.057)  & 24.5 (10.6) & 1165.6 & 0.0283 & 0.0433 \\
$h_\kappa$ & $3/2$  & $-0.361$ (0.079)  & 0.181 (0.040)  &  6.27 (0.54) & 1433.5 & 0.0122 & 0.0205 \\
$h_\kappa$& $5/2$ & $-0.383$ (0.048)   & 0.129 (0.020)  &  3.63 (0.17) & 1478.0 & 0.0114 & 0.0199 \\
$d$ & $1/2$   & $-0.421$ (0.051)   & 0.066 (0.011)  & 11.4 (1.94) & 1145.1 & 0.0294 & 0.0464 \\
\midrule
\multicolumn{6}{@{}l}{no model, prediction by the sample mean} & 0.2143 & 0.2701 \\
\bottomrule
\end{tabular}}
\label{tab:fit}
\end{table}

Second, a smooth model reduces the remaining error by about half. Passing from $\nu=1/2$ to $\nu=3/2$ under $h_{\kappa}$ increases the log-likelihood by $268$ and reduces the root mean squared error from $0.043$ to $0.021$; a further increase to $\nu=5/2$ produces a smaller gain. The
improvement over the benchmark is of the same magnitude, from $0.046$ to $0.020$. The range is also better determined under the smooth models, since its standard error falls from $10.6$ at $\nu=1/2$ to $0.17$ at $\nu=5/2$.

Figure~\ref{fig:krig} maps the predicted field over the whole tree under the Mat\'ern model with
$\nu=5/2$, including the portions described above. The standard error of prediction is small at
the nodes where the radius was recorded and increases along the discarded portions.

\begin{figure}
\centering
\includegraphics[width=\textwidth]{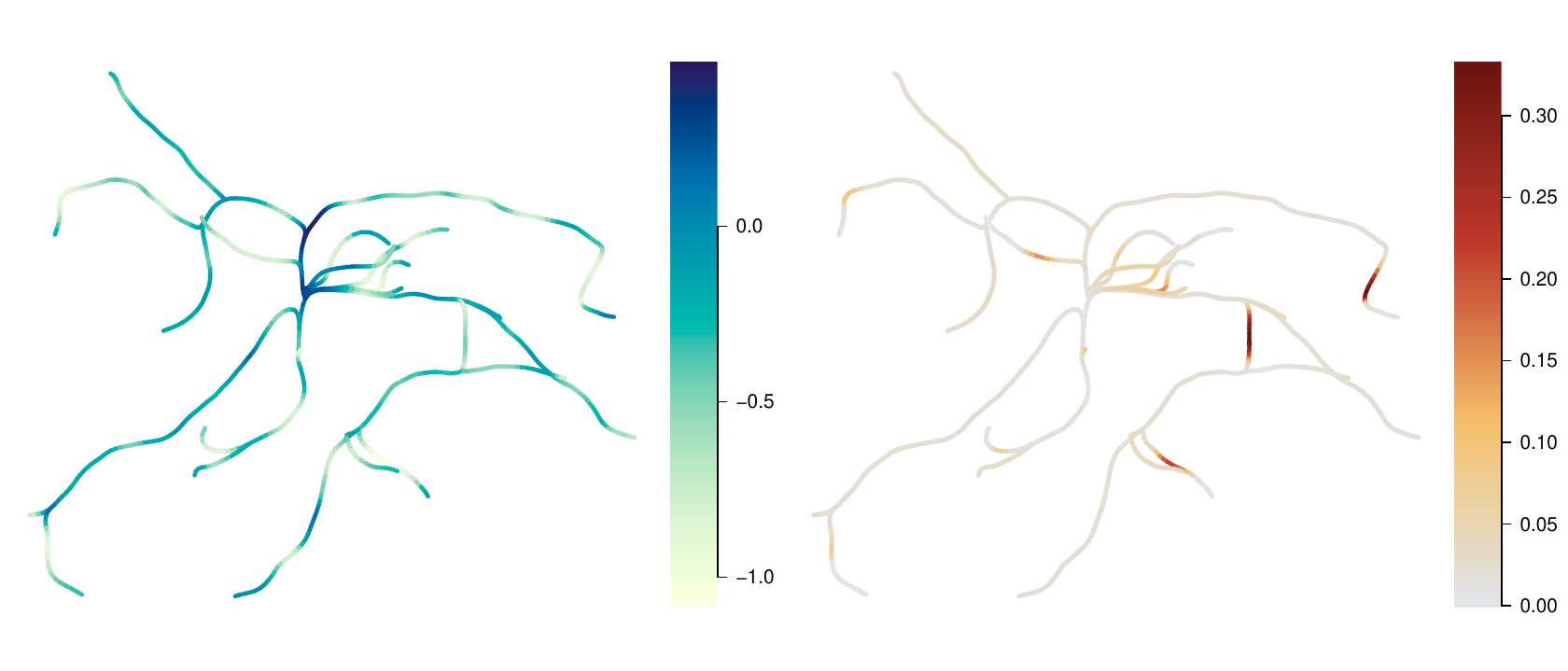}
\caption{Ordinary kriging of the logarithm of the local radius under the Mat\'ern model with
$\nu=5/2$. Left, the predicted field over the whole tree. Right, its standard error.}
\label{fig:krig}
\end{figure}

A resting microglial cell has many long thin processes, and when it is activated the processes
shorten and thicken, so that the degree of branching is a marker of the state of these cells in
injury, ageing and neurodegeneration \citep{colombo2022}. Quantifying that shape is the
measurement of interest, and a covariance model supplies it along the tree rather than in the
plane of the image, over portions where the radius was never recorded.

\section{Discussion}
\label{sec:discussion}

Validity on the tree requires a distance that places the tree inside a Euclidean space, whereas
the local behaviour of the field along an edge depends on how closely the distance follows the
length of the path. The metric built from the distances to the leaves places the tree inside
$\mathbb{R}^\kappa$, and inside every edge it follows that length exactly. The whole Euclidean
catalogue therefore becomes available on a tree, and the same argument gives matrix-valued and
space-time covariances. In the simulation study the parameters were estimated without
appreciable bias and the shortest-path metric was never selected once the field was smooth. On
the microglial cell of Section~\ref{sec:data} the smooth models reduce the cross-validated
prediction error by about half relative to the best model that the shortest-path metric allows.

Extending the construction to general metric graphs is the natural next step. Theorem \ref{thm:pd} remains valid on any metric graph, since its proof does not use the absence of cycles. The property that is lost is the agreement with the shortest-path metric inside each
edge. A reference point that can be reached from both ends of an edge makes the new distance strictly smaller than the shortest-path distance inside that edge, so that the restriction of the field to an edge is no longer the classical model of the Euclidean theory and Corollary \ref{cor:local} no longer applies. A first step is to measure that gap, in terms of the cycles that lie between a pair of points and the reference set, and to identify the graphs on which the local behaviour is still approximately the Euclidean one. Choosing the reference set is a second
question, since a graph with cycles may have no vertex of degree one, so that the leaves have no
immediate counterpart.

The restriction of the catalogue is not specific to graphs. The same obstacle arises on other non-Euclidean
domains, such as complex terrains in which the relevant distance follows the shape of the
domain and the catalogue of Table \ref{tab:phi} is again unavailable \citep{curriero2006}. On the sphere with the great-circle distance the same
pattern appears, as the Mat\'ern class survives only for small values of the smoothness and the powered exponential only for small values of its exponent \citep{10.3150/12-BEJSP06}. Comparing two points through their distances to a fixed set of reference points would make a large catalogue of covariance models available there as well. The device is the same in each case, that of keeping the covariance models and changing the way distances are measured, and what must be checked each time is the local behaviour of the new distance.

\section*{Acknowledgements}
Alfredo Alegr\'ia is partially supported by the National Agency for Research and Development of
Chile through grant ANID Fondecyt 1251154.

\section*{Code availability}
Code in \texttt{R} that evaluates the metric and illustrates simulation and maximum likelihood
estimation on a metric tree is available at \url{https://github.com/alfredoalegria/covariance-metric-trees}.

\section*{Appendix}

\subsection*{Notation and a preliminary lemma}

By a direction at a point $s\in\mathbb{T}$ we mean one of the ways of moving away from $s$ within $\mathbb{T}$. A point inside an edge has two of them, a vertex of degree $\ell$
has $\ell$, and a leaf has one. Moving away from $s$ along one direction and, at each vertex reached, continuing along an edge different from the one just used, one never returns to a point already visited, because $\mathbb{T}$ has no cycles, and, since $\mathbb{T}$ is a finite union
of edges, the walk stops at a vertex with no new edge to continue along, that is, at a leaf
$\xi$. Read backwards it is the path from $\xi$ to $s$, which is the only one and therefore has length $d(\xi,s)$. Every direction at a point thus leads to a leaf, a fact used repeatedly below.

Fix $s$ and $t$ and let $P$ be the path joining them. Since $\mathbb{T}$ has no cycles, the
path from a leaf $\xi_{j}$ to $s$ and the path from $\xi_{j}$ to $t$ reach $P$ at the same point
$m_j(s,t)$, and every path from $\xi_{j}$ to a point of $P$ passes through $m_j(s,t)$. Write $p_j(s,t)$ and
$p_j(t,s)$ for the lengths of the two pieces into which $m_j(s,t)$ splits $P$, so that they add to
$d(s,t)$, and, when $s\neq t$, write $x_j(s,t)=p_j(s,t)/d(s,t)$ for the position of $m_j(s,t)$ along
$P$ as a fraction of its length. Figure~\ref{fig:pj} shows the two cases that occur. The leaf
$\xi_{j}$ separates $s$ and $t$, in the sense of Section~\ref{sec:trees}, exactly when $m_j(s,t)$ is
interior to $P$, that is, when $0<x_j(s,t)<1$; and $m_j(s,t)$ is then a vertex of degree three or more.

\begin{figure}
\centering
\includegraphics[width=0.92\textwidth]{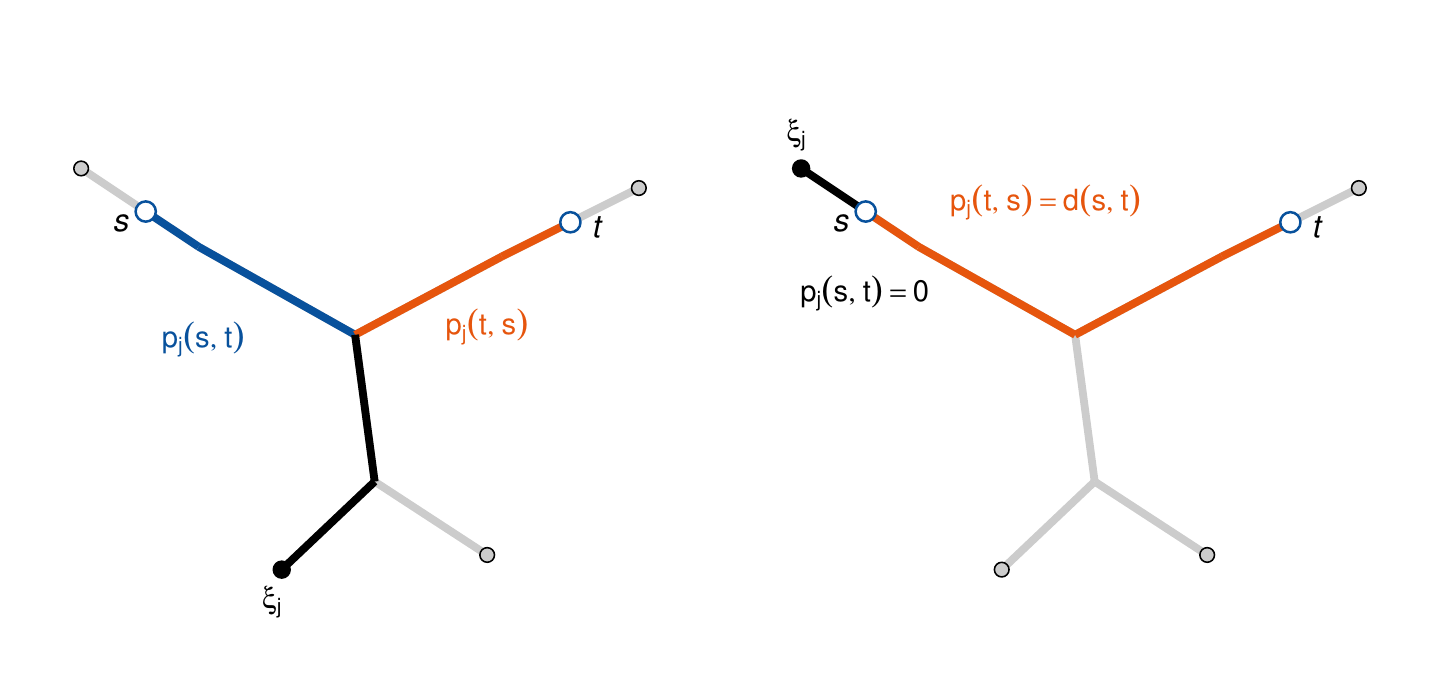}
\caption{A metric tree with four leaves and two of its points $s$ and $t$. In each panel the
path joining them is the coloured one and the path coming from the leaf $\xi_{j}$ is in black.
Left, that path reaching $P$ at an interior point, which splits it into the two pieces
$p_j(s,t)$ and $p_j(t,s)$, so that $\xi_{j}$ separates $s$ and $t$. Right, that path reaching
$P$ at $s$, so that $p_j(s,t)=0$ and $\xi_{j}$ does not separate them.}
\label{fig:pj}
\end{figure}

\begin{lemma}
\label{lem:id}
Let $s\neq t$ in $\mathbb{T}$. Then
\begin{equation}
\label{eq:xsq}
h_\kappa^2(s,t) = d^2(s,t)\left\{ 1 - \frac{4}{\kappa}\sum_{j=1}^{\kappa}
x_j(s,t)\{1-x_j(s,t)\} \right\} .
\end{equation}
\end{lemma}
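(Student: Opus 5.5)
The identity will follow from computing each summand of \eqref{eq:h} separately in terms of the splitting point $m_j(s,t)$ introduced above. Fix $s\neq t$ and a leaf $\xi_j$. Every path from $\xi_j$ to a point of $P$ passes through $m_j=m_j(s,t)$, so the path from $\xi_j$ to $s$ is the concatenation of the path from $\xi_j$ to $m_j$ and the piece of $P$ from $m_j$ to $s$. By uniqueness of paths in a tree this gives
\[
d(\xi_j,s)=d(\xi_j,m_j)+p_j(s,t),\qquad d(\xi_j,t)=d(\xi_j,m_j)+p_j(t,s).
\]
Here $p_j(s,t)$ denotes the length of the piece adjacent to $s$. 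The convention must be fixed consistently with the definition of $x_j$, but the final expression is symmetric under $x\mapsto 1-x$, so the choice does not matter.

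Subtracting, the common term $d(\xi_j,m_j)$ cancels. Writing $p_j(s,t)=x_j d(s,t)$ and $p_j(t,s)=(1-x_j)d(s,t)$, we get
\[
|d(\xi_j,s)-d(\xi_j,t)|=|p_j(s,t)-p_j(t,s)|=d(s,t)\,|1-2x_j|.
\]
Squaring and using $(1-2x)^2=1-4x(1-x)$ gives the summand $d^2(s,t)\{1-4x_j(1-x_j)\}$. Averaging over $j=1,\ldots,\kappa$ as in \eqref{eq:h} yields \eqref{eq:xsq} directly.

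The only step needing care is the first: justifying that the path from $\xi_j$ to $s$ really passes through $m_j$ and then runs along $P$. This is the acyclicity fact stated just before the lemma. The same argument covers the degenerate cases where $m_j$ equals $s$ or $t$, in which one piece has length zero and $x_j\in\{0,1\}$, so no case split is needed. The leaf $\xi_j$ may also coincide with $s$ or $t$; then $d(\xi_j,m_j)=0$ and the decomposition still holds.
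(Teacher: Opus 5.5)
Your proposal is correct and follows the paper's proof essentially line by line: write $d(\xi_j,s)$ and $d(\xi_j,t)$ as $d(\xi_j,m_j(s,t))$ plus the piece of $P$ on each side, and cancel the common term to get $\{2x_j(s,t)-1\}\,d(s,t)$. Then square, average over the leaves, and apply $(2x-1)^2=1-4x(1-x)$.
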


\noindent\emph{Proof of Lemma \ref{lem:id}}. Every path from $\xi_{j}$ to a point of $P$
passes through $m_j(s,t)$, so that $d(\xi_{j},s)=d(\xi_{j},m_j(s,t))+p_j(s,t)$ and
$d(\xi_{j},t)=d(\xi_{j},m_j(s,t))+p_j(t,s)$. Subtracting, and using $p_j(s,t)+p_j(t,s)=d(s,t)$,
\[
d(\xi_{j},s) - d(\xi_{j},t) = p_j(s,t) - p_j(t,s) = \{2x_j(s,t)-1\}\,d(s,t) .
\]
Squaring, averaging over $j=1,\hdots,\kappa$ and using \eqref{eq:h},
$h_\kappa^2(s,t)=d^2(s,t)\kappa^{-1}\sum_{j=1}^\kappa\{2x_j(s,t)-1\}^2$, which is \eqref{eq:xsq} because
$(2x-1)^2=1-4x(1-x)$.
\qed

\subsection*{Proof of Proposition \ref{thm:struct}}

\begin{itemize}
\item[(i)] Write $\Delta(s,t)$ for the vector of $\mathbb{R}^\kappa$ with entries
$\kappa^{-1/2}\{d(s,\xi_j)-d(t,\xi_j)\}$, so that $h_\kappa(s,t)=\|\Delta(s,t)\|$ by \eqref{eq:h}.
Symmetry and nonnegativity are immediate, and the triangle inequality follows from the one for the
Euclidean norm, because $\Delta(s,t)=\Delta(s,u)+\Delta(u,t)$ for every $u\in\mathbb{T}$.
It remains to show that $h_\kappa(s,t)=0$ forces $s=t$, that is, that the distances to the leaves determine the point. Suppose $d(\xi_{j},s)=d(\xi_{j},t)$ for every leaf, and let $\xi_{a}$ and $\xi_{b}$ be the leaves reached along two directions at $s$, taking $\xi_{a}=s$ when $s$ is itself a leaf. Putting the two walks together gives the path from $\xi_{a}$ to $\xi_{b}$, so that
$d(\xi_{a},s)+d(\xi_{b},s)=d(\xi_{a},\xi_{b})$, and the hypothesis gives
$d(\xi_{a},t)+d(\xi_{b},t)=d(\xi_{a},\xi_{b})$, which holds only for points of the path joining
$\xi_{a}$ and $\xi_{b}$. Hence $t$ lies on that path, where $s$ already lies, and since the distance
to $\xi_{a}$ grows strictly along it and $d(\xi_{a},s)=d(\xi_{a},t)$, we conclude $s=t$.

\item[(ii)] Each term $x_j(s,t)\{1-x_j(s,t)\}$ of \eqref{eq:xsq} lies in
$[0,1/4]$, so the factor in braces lies in $[0,1]$, which gives the upper bound. For the lower bound, take at
$s$ a direction other than the one towards $t$, and let $\xi_{a}$ be the leaf reached along it.
The path from $\xi_{a}$ to $s$ reaches $P$ only at $s$, so that $m_a(s,t)=s$ and $x_a(s,t)=0$; if $s$ is a
leaf it has no such direction, and $\xi_{a}=s$ serves instead. Arguing in the same way at $t$
gives an index $b$ with $x_b(s,t)=1$. The terms of $\xi_{a}$ and $\xi_{b}$ vanish, so the sum in
\eqref{eq:xsq} is at most $(\kappa-2)/4$, the factor in braces is at least $2/\kappa$, and
$h_\kappa^2(s,t)\geq 2d^2(s,t)/\kappa$.

\item[(iii)] Equality holds in $h_\kappa(s,t)\leq d(s,t)$ precisely when every term of the sum in \eqref{eq:xsq} vanishes, that is, when $x_j(s,t)\in\{0,1\}$ for every $j=1,\hdots,\kappa$. Accordingly, if $m_j(s,t)$ is interior to $P$, then the path from $\xi_{j}$ meets $P$ at $m_j(s,t)$ and is not contained in $P$, so that $m_j(s,t)$ is a vertex of degree three or more. Hence, if no such vertex lies strictly between $s$ and $t$, then
$m_j(s,t)\in\{s,t\}$ and $x_j(s,t)\in\{0,1\}$ for every $j=1,\hdots,\kappa$, so that $h_\kappa(s,t)=d(s,t)$.
Conversely, let $b$ be a vertex of degree three or more lying strictly between $s$ and $t$. Two of the edges that meet at $b$ lie on $P$, and extending the path from $b$ along a third one until it can be extended no further produces a leaf whose path to $P$ meets it at $b$. For that leaf $x_j(s,t)$ lies strictly between $0$ and $1$, and $h_\kappa(s,t)<d(s,t)$. The interior of an edge contains no vertex, so the equality holds in particular when $s$ and $t$ lie
in a common edge.
 \end{itemize}
\qed

\subsection*{Proof of Proposition \ref{thm:local}}

A term of the sum in \eqref{eq:xsq} is zero unless $0<x_j(s,t)<1$, which
happens exactly for the $\kappa_{st}$ leaves that separate $s$ and $t$. Each summand is at most
$1/4$, so the sum is at most $\kappa_{st}/4$ and \eqref{eq:local} follows. Taking
$\kappa_{st}=\kappa-2$ returns the left-hand inequality of \eqref{eq:bilip}, and $\kappa_{st}=0$
returns $h_\kappa(s,t)\geq d(s,t)$, which with part (ii) gives the equality of part (iii).

\qed

\subsection*{Proof of Theorem \ref{thm:pd} and of Corollaries \ref{cor:multi} and \ref{cor:st}}

The proof uses a map under which $h_\kappa$ becomes a Euclidean distance. Define
$\Psi: \mathbb{T} \longrightarrow \mathbb{R}^{\kappa}$ by
\begin{equation}
\label{eq:map}
\Psi(s) = \frac{1}{\sqrt{\kappa}} \big( d(\xi_{1},s), \hdots, d(\xi_{\kappa},s) \big), \quad s\in\mathbb{T}.
\end{equation}
Comparing with \eqref{eq:h},
\begin{equation}
\label{eq:iso}
\|\Psi(s)-\Psi(t)\| = h_\kappa(s,t),
\qquad s,t\in\mathbb{T} .
\end{equation}
Given $s_1,\hdots,s_r\in\mathbb{T}$ and $a_1,\hdots,a_r\in\mathbb{R}$, write $\bm{t}_i=\Psi(s_i)$. By
\eqref{eq:iso},
$$\sum_{i=1}^r\sum_{q=1}^r a_i a_q\, \varphi\big( h_\kappa(s_i,s_q) \big) = \sum_{i=1}^r\sum_{q=1}^r
a_i a_q\, \varphi\big( \|\bm{t}_i - \bm{t}_q\| \big) \;\geq\; 0 ,$$
because $\varphi\in\Phi_\kappa$ and the $\bm{t}_i$ are points of $\mathbb{R}^\kappa$.

Corollaries \ref{cor:multi} and \ref{cor:st} follow by the same substitution. The quadratic form of \eqref{eq:multi} at $s_1,\hdots,s_r\in\mathbb{T}$ with vector-valued coefficients
$\bm{a}_1,\hdots,\bm{a}_r\in\mathbb{R}^p$, and the quadratic form of \eqref{eq:st} at
$(s_1,u_1),\hdots,(s_r,u_r)\in\mathbb{T}\times\mathbb{R}$ with real coefficients, become
$$\sum_{i=1}^r\sum_{q=1}^r \bm{a}_i^\top\, \bm{\varphi}\big( \|\bm{t}_i - \bm{t}_q\| \big)\, \bm{a}_q
\qquad\text{and}\qquad
\sum_{i=1}^r\sum_{q=1}^r a_i a_q\, \widetilde{\varphi}\big( \|\bm{t}_i - \bm{t}_q\|, |u_i-u_q| \big) .$$
The first is nonnegative because $\bm{\varphi}$ is a matrix-valued covariance function on
$\mathbb{R}^\kappa$ and the $\bm{t}_i$ are points of that space, and the second because $\widetilde{\varphi}$ defines a
covariance function on $\mathbb{R}^\kappa\times\mathbb{R}$ and the $(\bm{t}_i,u_i)$ are points of
that space.

\qed

\bibliographystyle{apalike}
\bibliography{matern}

\end{document}